\newtheorem{lemma}{Lemma}
\newtheorem{theorem}{Theorem}
\newtheorem{corollary}{Corollary}
\newtheorem{proposition}{Proposition}
\newtheorem{question}{Question}
\theoremstyle{definition}
\newtheorem{definition}{Definition}
\newtheorem{problem}{Problem}
\newtheorem{example}{Example}
\newtheorem{assumption}{Running assumption}
\newcommand\csm[2]{\llbracket{#1},{#2}\rrbracket}
\newcommand\D[2]{\csm{#2}{\{1\}}\ominus\csm{#2}{{#1}\setminus{#2}}}
\newcommand\Fin{Fin}
\renewcommand\and{\text{ and }}
\begin{document}
\title{Compatibility support mappings in effect algebras}
\author{Gejza Jen\v ca}
\address{
Department of Mathematics and Descriptive Geometry\\
Faculty of Civil Engineering\\
Radlinsk\' eho 11\\
	Bratislava 813 68\\
	Slovak Republic
}
\email{gejza.jenca@stuba.sk}
\thanks{
This research is supported by grants VEGA G-1/0080/10 of M\v S SR,
Slovakia and by the Slovak Research and Development Agency under the contracts
No. APVT-51-032002, APVV-0071-06.
}
\subjclass{Primary: 03G12, Secondary: 06F20, 81P10} 
\keywords{effect algebra, observables} 
\begin{abstract}
We give a characterization of subsets of effect algebras, that can be
embedded into a range of an observable. To give this characterization,
we introduce a new notion of {\em compatibility support mappings.}
\end{abstract}
\maketitle

\section{Introduction and motivation}

\begin{question}
Let $S$ be a set of effects on a separable Hilbert space $\mathbb H$. Is there a
measurable space $(X,\mathcal A)$ and a POV-measure
$\alpha:(X,\mathcal A)\to\mathcal E(\mathbb H)$ such that
$S$ is a subset of the range of $\alpha$?
\end{question}

If $S$ consists only of orthogonal projections (that means, idempotent effects),
then the answer is simple: $S$ is
a subset of the range of a POV-measure iff the elements of $S$ commute.
On the other hand, if there are non-idempotent effects in $S$, the answer is not
known.

In the present paper, we examine a related question: 
\begin{question}
If $S$ is a subset of an effect algebra $E$, is there
a Boolean algebra $B$ and a morphism of effect algebras $\alpha:B\to E$
such that $S\subseteq\alpha(B)$?
\end{question}

This can be considered as a quantum-logical version of Question 1.
We prove
that, given subset $S$ of an effect algebra $E$ such that $1\in S$,
there exist a Boolean algebra $B$ and a morphism $\alpha:B\to E$ with
$S\subseteq\alpha(B)$ if and only if there is a mapping
$\csm{~.~}{~.~}:\Fin(S)\times\Fin(S)\to E$ satisfying certain properties.
We call them {\em compatibility support mappings}. The proof uses a 
modification of the limit techniques introduced in \cite{CatDAlGiuPul:EAaPM}.

We show that compatibility support mappings, and hence pairs $(B,\alpha)$,
exist whenever $S$ is an MV-algebra or $S$ is a pairwise 
commuting set of effects on a Hilbert space.
We prove several properties of strong 
compatibility support maps, 
generalizing the properties of the prototype Example \ref{ex:joinmeet}. 

The results presented in this paper are more general than the results
from an earlier paper \cite{Jen:CiIEA}, where only interval effect algebras
were considered. In that paper, a related notion of {\em witness 06} was introduced
to characterize coexistent subsets of interval effect algebras.

In the last section, we examine connections between 
compatibility support mappings and witness mappings.
We prove that, for a subset $S$ of an interval effect algebra,
every compatibility support map for $S$ gives rise to a witness mapping for $S$. 
We do not know whether this relationship is a one-to-one correspondence. 

\section{Definitions and basic relationships}

An {\em effect algebra} is a partial algebra $(E;\oplus,0,1)$ with a binary 
partial operation $\oplus$ and two nullary operations $0,1$ satisfying
the following conditions.
\begin{enumerate}
\item[(E1)]If $a\oplus b$ is defined, then $b\oplus a$ is defined and
		$a\oplus b=b\oplus a$.
\item[(E2)]If $a\oplus b$ and $(a\oplus b)\oplus c$ are defined, then
		$b\oplus c$ and $a\oplus(b\oplus c)$ are defined and
		$(a\oplus b)\oplus c=a\oplus(b\oplus c)$.
\item[(E3)]For every $a\in E$ there is a unique $a'\in E$ such that
		$a\oplus a'=1$.
\item[(E4)]If $a\oplus 1$ exists, then $a=0$
\end{enumerate}

Effect algebras were introduced by Foulis and Bennett in their paper 
\cite{FouBen:EAaUQL}. Independently, K\^ opka and Chovanec introduced
an essentially equivalent structure called {\em D-poset} (see \cite{KopCho:DP}).
Another equivalent structure, called {\em weak orthoalgebras} 
was introduced by Giuntini and Greuling in \cite{GiuGre:TaFLfUP}.

For brevity, we denote the effect algebra $(E,\oplus,0,1)$ by $E$.
In an effect algebra $E$, we write $a\leq b$ iff there is $c\in E$ such
that $a\oplus c=b$.
It is easy to check that every effect algebra is cancellative, thus
$\leq$ is a partial order on $E$. In this partial order,
$0$ is the least and $1$ is the greatest element of $E$.
Moreover, it is possible to introduce
a new partial operation $\ominus$; $b\ominus a$ is defined iff
$a\leq b$ and then $a\oplus(b\ominus a)=b$.
It can be proved that $a\oplus b$ is defined iff $a\leq b'$ iff
$b\leq a'$. It is usual to denote the domain of $\oplus$ by $\perp$.
If $a\perp b$, we say that $a$ and $b$ are {\em orthogonal}.

\begin{example}
The prototype example of an effect algebra is the {\em standard effect algebra
$\mathcal E(\mathbb H)$.} Let $\mathbb H$ be a Hilbert space. Let
$\mathcal S(\mathbb H)$ be the set of all bounded self-adjoint operators.
on $\mathbb H$. Let $\mathbb I$ be the identity operator $\mathbb H$.

For 
$A,B\in\mathcal S(\mathbb H)$, write $A\leq B$ if and only if,
for all $x\in\mathbb H$, $\langle Ax,x\rangle\leq\langle Bx,x\rangle$.

Put $\mathcal E(\mathbb H)=\{X\in\mathcal S(\mathbb H):0\leq X\leq I\}$
and for $A,B\in\mathcal E(\mathbb H)$ define $A\oplus B$ iff 
$A\oplus B\leq I$, $A\oplus B=A+B$. 
Then $(\mathcal E(\mathbb H),\oplus,0,I)$ is an effect algebra.
The elements of $\mathcal E(\mathbb H)$ are called Hilbert space effects.
\end{example}

An effect algebra $E$ is {\em lattice ordered} iff $(E,\leq)$ is a lattice.
An effect algebra is an {\em orthoalgebra} iff $a\perp a$ implies $a=0$.
An orthoalgebra that is lattice ordered is an orthomodular lattice.

An {\em MV-effect algebra} is a lattice ordered effect algebra $M$ in which,
for all $a,b\in M$, $(a\lor b)\ominus a=b\ominus (a\land b)$. It is proved
in \cite{ChoKop:BDP} that there is a natural, one-to one correspondence between
MV-effect algebras and MV-algebras given by the following rules.
Let $(M,\oplus,0,1)$ be an MV-effect algebra. Let $\boxplus$ 
be a total operation given by $x \boxplus y=x\oplus(x'\land y)$. Then
$(M,\boxplus,',0)$ is an MV-algebra. Similarly, let 
$(M,\boxplus,\lnot,0)$ be an MV-algebra. Restrict the operation
$\boxplus$ to the pairs $(x,y)$ satisfying $x\leq y'$ and call the
new partial operation $\oplus$. Then $(M,\oplus,0,\lnot 0)$ is an MV-effect algebra.

Among lattice ordered effect algebras, MV-effect algebras can be characterized
in a variety of ways. Three of them are given in the following
proposition.
\begin{proposition}
\cite{BenFou:PSEA}, \cite{ChoKop:BDP}
Let $E$ be a lattice ordered effect algebra. The following are equivalent
\begin{enumerate}
\item[(a)] $E$ is an MV-effect algebra.
\item[(b)] For all $a,b\in E$, $a\land b=0$ implies $a\leq b'$.
\item[(c)] For all $a,b\in E$, $a\ominus(a\land b)\leq b'$.
\item[(d)] For all $a,b\in E$, there exist $a_1,b_1,c\in E$ such that
$a_1\oplus b_1\oplus c$ exists, $a_1\oplus c=a$ and $b_1\oplus c=b$.
\end{enumerate}
\end{proposition}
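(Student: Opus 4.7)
The plan is to establish the cycle (a)$\Rightarrow$(c)$\Rightarrow$(b)$\Rightarrow$(a) and then to tie (d) into this loop via (a)$\Rightarrow$(d)$\Rightarrow$(b). The first two implications are quick. For (a)$\Rightarrow$(c), I would swap $a$ and $b$ in the defining MV identity to get $a\ominus(a\land b)=(a\lor b)\ominus b$; since $((a\lor b)\ominus b)\oplus b=a\lor b\leq 1$, the left-hand side is bounded by $b'$. For (c)$\Rightarrow$(b), the assumption $a\land b=0$ reduces (c) directly to $a\leq b'$.

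The substantive work is (b)$\Rightarrow$(a). Write $m=a\land b$. I would first check $(a\ominus m)\land(b\ominus m)=0$: if $c$ is a common lower bound, then associativity and cancellation give $c\oplus m\leq a$ and $c\oplus m\leq b$, hence $c\oplus m\leq m$ and $c=0$. Condition (b) then yields $(a\ominus m)\perp(b\ominus m)$. The technical core is to show that the pairwise sum is in turn compatible with $m$, so that the triple sum $(a\ominus m)\oplus(b\ominus m)\oplus m$ is defined; I would aim to obtain this by a second application of (b), after verifying that the pairwise sum meets $m$ trivially. A direct lattice calculation then identifies the resulting element with $a\lor b$ (it dominates both $a$ and $b$, and any common upper bound $e\geq a,b$ dominates it by a reshuffling via $e\ominus a$ and $b\ominus m\leq e\ominus a$), and the MV identity $(a\lor b)\ominus a=b\ominus m$ drops out by $\oplus$-cancellation.

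To close the loop through (d): for (a)$\Rightarrow$(d), take $c=a\land b$, $a_1=a\ominus c$, $b_1=b\ominus c$, with existence of $a_1\oplus b_1\oplus c$ supplied by the preceding step. Conversely, (d)$\Rightarrow$(b) is immediate, since $a\land b=0$ forces the witness $c$ in the decomposition to be $\leq a\land b=0$, whence $a_1\oplus b_1=a\oplus b$ exists and $a\leq b'$.

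The main obstacle, as flagged above, is producing the triple sum in (b)$\Rightarrow$(a): pairwise orthogonality of $x$ and $y$ does not automatically combine with orthogonality of $x\oplus y$ and a third element in a general effect algebra. So the step requires an independent argument that the pairwise sum of $a\ominus m$ and $b\ominus m$ has trivial meet with $m$, after which (b) can be invoked a second time to supply the joint orthogonality needed for the join identification.
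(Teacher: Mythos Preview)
First, note that the paper itself does not prove this proposition; it is quoted with attribution to \cite{BenFou:PSEA} and \cite{ChoKop:BDP}, so there is no in-paper argument to compare your sketch against.

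On the substance of your outline: the implications (a)$\Rightarrow$(c), (c)$\Rightarrow$(b) and (d)$\Rightarrow$(b) are correct as written. The gap is exactly where you flag it, in (b)$\Rightarrow$(a), but the repair you propose cannot work. You suggest verifying that $\bigl((a\ominus m)\oplus(b\ominus m)\bigr)\land m=0$ (with $m=a\land b$) so that (b) can be invoked a second time to produce the triple sum. This meet is not trivial in general. Already in the MV-effect algebra $[0,1]$ with $a=0.7$ and $b=0.5$ one has $m=0.5$, $a\ominus m=0.2$, $b\ominus m=0$, so the pairwise sum is $0.2$ and its meet with $m$ is $0.2\neq 0$. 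The variant route, showing $(a\ominus m)\land b=0$ and applying (b) once, fails in the same example. Hence the passage from (b) to the existence of $(a\ominus m)\oplus(b\ominus m)\oplus m$ genuinely needs a different idea; that is where the nontrivial content of the cited references lies, and your plan does not yet supply it.

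Two smaller remarks. Your (a)$\Rightarrow$(d) can be detached from the problematic step: under (a) you already have (c), and $a\ominus m\le b'$ gives directly that $(a\ominus m)\oplus b=(a\ominus m)\oplus(b\ominus m)\oplus m$ exists. Also, even granting the triple sum, your identification of $(a\ominus m)\oplus(b\ominus m)\oplus m$ with $a\lor b$ leans on ``$b\ominus m\le e\ominus a$'' for an arbitrary upper bound $e$; deducing this from $b\ominus m\le e\ominus m=(e\ominus a)\oplus(a\ominus m)$ together with $(b\ominus m)\land(a\ominus m)=0$ is essentially a Riesz-type cancellation that is not available in a bare lattice effect algebra, so that step would also need justification.
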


Let $B$ be a Boolean algebra and let $E$ be an effect algebra.
An {\em observable} is a mapping $\alpha:B\to E$ such that
$\alpha(0)=0$, $\alpha(1)=1$ and for every $x,y\in B$ such that
$x\wedge y=0$, $\phi(x\vee y)=\phi(x)\oplus\phi(y)$.

\section{Compatibility support mappings --- definition and examples}

In this section we introduce (strong) 
compatibility support mappings and present
two examples.

\begin{definition}
\label{def:csm}
Let $E$ be an effect algebra, let $S\subseteq E$ be such that $1\in S$.
We say that $\csm{~.~}{~.~}:\Fin(S)\times\Fin(S)\to E$ is a {\em compatibility
support mapping for $S$} if and only if the following
conditions are satisfied.
\begin{enumerate}
\item[(a)]If $V_1\subseteq V_2$, then $\csm{U}{V_1}\leq\csm{U}{V_2}$.
\item[(b)]$\csm{U}{V}\leq\csm{U}{\{1\}}$.
\item[(c)]$\csm{U}{\emptyset}=0$.
\item[(d)]$\csm{\emptyset}{\{c\}}=c$.
\item[(e)]If $c\notin U\cup V$, then $\csm{U\cup\{c\}}{\{1\}}\ominus\csm{U\cup\{c\}}{V}=
	\csm{U}{V\cup\{c\}}\ominus\csm{U}{V}$
\end{enumerate}
A compatibility mapping is {\em strong} if and only if the following
condition is satisfied.
\begin{enumerate}
\item[(e*)]For all $c$, $\csm{U\cup\{c\}}{\{1\}}\ominus\csm{U\cup\{c\}}{V}=
	\csm{U}{V\cup\{c\}}\ominus\csm{U}{V}$
\end{enumerate}
Note that (e*) implies (e).
\end{definition}
\begin{example}
\label{ex:joinmeet}
Let $M$ be an MV-effect algebra. Define 
$\csm{~.~}{~.~}:\Fin(M)\times\Fin(M)\to E$
by
$$
\csm{U}{V}=(\bigwedge U)\wedge(\bigvee V).
$$
Then $\csm{~.~}{~.~}$ is a strong compatibility support mapping.
The conditions (a)-(d) are easy to prove.
Let us prove (e*).
\begin{align*}
\csm{U}{V\cup\{c\}}\ominus\csm{U}{V}=
(\bigwedge U)\wedge(c\vee(\bigvee V))\ominus
((\bigwedge U)\wedge(\bigvee V))=\\
=((\bigwedge U)\wedge c)\vee((\bigwedge U)\wedge(\bigvee V))\ominus
((\bigwedge U)\wedge(\bigvee V))=\\
=((\bigwedge U)\wedge c)\ominus((\bigwedge U)\wedge c\wedge(\bigvee V))=\\
=\csm{U\cup\{c\}}{\{1\}}\ominus\csm{U\cup\{c\}}{V}
\end{align*}
\end{example}
\begin{example}
\label{ex:product}
Let $\sqcup$ be an operation on the set of all
operators on a Hilbert space $\mathbb H$ given by
$$
a\sqcup b:=a+b-ab.
$$
It is easy to check that $\sqcup$ is associative with neutral element $0$.

If $a$ and $b$ are commuting effects, then $a.b$ is an effect with $a.b\leq a,b$. Moreover,
$a\sqcup b$ is an effect. Indeed, since $a,b$ are commuting effects, 
$1-a,1-b$ are commuting effects. Since $1-a,1-b$ are commuting effects,
$(1-a).(1-b)$ is an effect and
$$
1-(1-a).(1-b)=1-(1-a-b+ab)=a+b-ab
$$
is an effect.

Let $S$ be a set of commuting effects with $1\in S$;
there exists a commutative
$C^*$ algebra $A$ with $S\subseteq A$.
The operations $\sqcup, .$ are
commutative and associative
on $A\cap\mathcal E(\mathbb H)\supseteq S$.

Let $U,V$ be a finite subsets of $S$. Write $\bigsqcap U$ for the product of
elements of $U$. Write $\bigsqcup\emptyset=0$, $\bigsqcup\{c\}=c$ and,
for $V=\{v_1,\dots,v_n\}$ with  $n>1$, write 
$$
\bigsqcup V=v_1\sqcup\dots\sqcup v_n.
$$

Define
$\csm{~.~}{~.~}:\Fin(S)\times\Fin(S)\to E$
$$
\csm{U}{V}=(\bigsqcap U).(\bigsqcup V).
$$
Let us prove that
$\csm{~.~}{~.~}$ is a compatibility support mapping.

Proof of condition (a):
Suppose that $V_1\subseteq V_2$. We need to prove that $\csm{U}{V_1}\leq\csm{U}{V_2}$.
Let us prove that $\bigsqcup V_1\leq\bigsqcup V_2$.
Since $V_1\subseteq V_2$, we may write
\begin{align*}
\bigsqcup V_2=(\bigsqcup V_1)\sqcup (\bigsqcup (V_2\setminus V_1))=\\
=(\bigsqcup V_1)+(\bigsqcup (V_2\setminus V_1))
	-(\bigsqcup V_1).(\bigsqcup (V_2\setminus V_1))
\end{align*}
Therefore,
$$
(\bigsqcup V_2)-(\bigsqcup V_1)=
(\bigsqcup (V_2\setminus V_1))-(\bigsqcup V_1).(\bigsqcup (V_2\setminus V_1))\geq 0,
$$
so $\bigsqcup V_1\leq \bigsqcup V_2$. Since $\bigsqcup V_1\leq \bigsqcup V_2$,
$$
\csm{U}{V_1}=(\bigsqcap U).(\bigsqcup V_1)\leq(\bigsqcap U).(\bigsqcup V_1)=\csm{U}{V_2}.
$$

The conditions (b)-(d) are trivially satisfied.

Proof of condition the (e):
\begin{align*}
\csm{U}{V\cup\{c\}}-\csm{U}{V}=
(\bigsqcap U).(c\sqcup\bigsqcup V)-(\bigsqcap U).(\bigsqcup V)=\\
=(\bigsqcap U).(c+\bigsqcup V-c.(\bigsqcup V))-(\bigsqcap U).(\bigsqcup V)=\\
=(\bigsqcap U).c+(\bigsqcap U).(\bigsqcup V)-(\bigsqcap U).c.(\bigsqcup V)-
	(\bigsqcap U).(\bigsqcup V)=\\
=(\bigsqcap U).c-(\bigsqcap U).c.(\bigsqcup V)=
\csm{U\cup\{c\}}{\{1\}}\ominus\csm{U\cup\{c\}}{V}
\end{align*}

Note that, if $S$ contains some non-idempotent $c$, 
then $\csm{~.~}{~.~}$ is not strong. To see that (e*) is not satisfied,
put $U=V=\{c\}$ and compute
\begin{align*}
\csm{U\cup\{c\}}{\{1\}}\ominus\csm{U\cup\{c\}}{V}=c\ominus c.c\neq 0\\
\csm{U}{V\cup\{c\}}\ominus\csm{U}{V}=c.c\ominus c.c=0
\end{align*}
\end{example}

\section{Observables from compatibility support mappings}

The aim of this section is to prove that for every $S$ such that
$S\cup\{1\}$ admits a compatibility support mapping, then $S$ is coexistent.

The direct limit method used here is a dual of the projective limit method
introduced in \cite{CatDAlGiuPul:EAaPM}. See also \cite{Pul:AnoooM} for another
application of the projective limit method.

Several proofs in this section (Lemma 3 through Theorem 1) are very similar, 
or even the same, as in \cite{Jen:CiIEA}. The reason for this is that they
are basically an application of Lemma \ref{lemma:first}, which is
the Proposition 4 of \cite{Jen:CiIEA}. 
However, the author decided to include them here, to keep
the present paper more streamlined.

\begin{assumption}
In this section, we assume the following.
\begin{itemize}
\item $E$ is an effect algebra.
\item $S$ is a subset of $E$ with $1\in S$.
\item $\csm{~.~}{~.~}:\Fin(S)\times \Fin(S)\to S$ is a compatibility support mapping.
\end{itemize}
\end{assumption}

\begin{lemma}
\label{lemma:c1isc}
For all $c\in S$, $\csm{\{c\}}{\{1\}}=c$.
\end{lemma}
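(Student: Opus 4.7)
The plan is to specialize condition (e) of Definition \ref{def:csm} to the case $U = V = \emptyset$. Since $c \notin \emptyset \cup \emptyset$ is vacuously satisfied for any $c \in S$, condition (e) directly yields
$$\csm{\{c\}}{\{1\}} \ominus \csm{\{c\}}{\emptyset} = \csm{\emptyset}{\{c\}} \ominus \csm{\emptyset}{\emptyset}.$$
I would then invoke (c) to rewrite both $\csm{\{c\}}{\emptyset}$ and $\csm{\emptyset}{\emptyset}$ as $0$, and (d) to identify $\csm{\emptyset}{\{c\}}$ with $c$. Since $x \ominus 0 = x$ in any effect algebra (because $0$ is the least element and $0 \oplus x = x$), the displayed identity collapses to $\csm{\{c\}}{\{1\}} = c$.

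There is essentially no obstacle here; the lemma is a one-line consequence of axiom (e) applied at the smallest possible indices. The only bookkeeping is verifying that both differences are well defined: the right-hand one because $0 \le \csm{\emptyset}{\{c\}}$ holds automatically, and the left-hand one because condition (b) already forces $\csm{\{c\}}{\emptyset} \leq \csm{\{c\}}{\{1\}}$. Conceptually this lemma plays the role of a base case, anchoring the value of $\csm{\{c\}}{\{1\}}$ so that later arguments can induct on the cardinality of $U$.
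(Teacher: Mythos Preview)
Your proof is correct and matches the paper's own argument essentially line for line: the paper also sets $U=V=\emptyset$ in condition (e) and then invokes (c) and (d) to conclude. Your additional remarks on well-definedness of the $\ominus$ terms are harmless extra detail.
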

\begin{proof}
Put $U=V=\emptyset$ in condition (e) of Definition \ref{def:csm}. We see that
$$
\csm{\{c\}}{\{1\}}\ominus\csm{\{c\}}{\emptyset}=
	\csm{\emptyset}{\{c\}}\ominus\csm{\emptyset}{\emptyset}.
$$
By conditions (c) and (d), this implies that $\csm{\{c\}}{\{1\}}=c$.
\end{proof}
Let us write, for $A,X\in\Fin(S)$ such that $X\subseteq A$,
$$
D(X,A)=\csm{X}{\{1\}}\ominus\csm{X}{A\setminus X}.
$$
\begin{lemma}
\label{lemma:first}
Let $A,X\in\Fin(S)$, $X\subseteq A$ and
let $c\in S$ be such that
$c\not\in A$. Then
$$
D(X,A)=D(X,A\cup\{c\})\oplus D(X\cup\{c\},A\cup\{c\}).
$$
\end{lemma}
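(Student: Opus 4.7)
The plan is a direct calculation: expand both $D$'s on the right-hand side using the defining formula, apply condition (e) of Definition \ref{def:csm} with $U = X$ and $V = A\setminus X$ to rewrite one of the resulting differences, and finish with a telescoping identity for $\ominus$ in an effect algebra.

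First I would unpack the right-hand side. Since $c\notin A$, we have $c\notin X$, so $(A\cup\{c\})\setminus X = (A\setminus X)\cup\{c\}$ and $(A\cup\{c\})\setminus(X\cup\{c\}) = A\setminus X$. Therefore
\begin{align*}
D(X,A\cup\{c\}) &= \csm{X}{\{1\}}\ominus\csm{X}{(A\setminus X)\cup\{c\}},\\
D(X\cup\{c\},A\cup\{c\}) &= \csm{X\cup\{c\}}{\{1\}}\ominus\csm{X\cup\{c\}}{A\setminus X}.
\end{align*}

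Next, I would apply condition (e) of Definition \ref{def:csm} with $U := X$ and $V := A\setminus X$. The hypothesis $c\notin U\cup V = A$ holds, so (e) gives
$$
\csm{X\cup\{c\}}{\{1\}}\ominus\csm{X\cup\{c\}}{A\setminus X}
= \csm{X}{(A\setminus X)\cup\{c\}}\ominus\csm{X}{A\setminus X}.
$$
This rewrites $D(X\cup\{c\},A\cup\{c\})$ so that, together with $D(X,A\cup\{c\})$, it becomes a pair of consecutive differences sharing the middle element $\csm{X}{(A\setminus X)\cup\{c\}}$.

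Finally, I would invoke the elementary telescoping rule: in any effect algebra, if $a\leq b\leq c$ then $(c\ominus b)\oplus(b\ominus a) = c\ominus a$. Put $a = \csm{X}{A\setminus X}$, $b = \csm{X}{(A\setminus X)\cup\{c\}}$, $c = \csm{X}{\{1\}}$; the inequalities $a\leq b$ and $b\leq c$ are supplied by conditions (a) and (b) respectively. The conclusion is exactly $D(X,A) = D(X,A\cup\{c\})\oplus D(X\cup\{c\},A\cup\{c\})$, as required.

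There is no real obstacle here; the only care needed is the set-theoretic bookkeeping to ensure that (e) applies with the correct choices of $U$ and $V$, which in turn relies on the hypothesis $c\notin A$ and the inclusion $X\subseteq A$.
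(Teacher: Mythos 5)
Your proposal is correct and follows essentially the same route as the paper's own proof: expand both terms on the right, apply condition (e) with $U=X$, $V=A\setminus X$, and telescope through the middle element $\csm{X}{(A\setminus X)\cup\{c\}}$. The only difference is that you justify the telescoping step explicitly via conditions (a) and (b), which the paper leaves implicit.
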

\begin{proof}
We see that
\begin{align*}
D(X,A\cup\{c\})=&\csm{X}{\{1\}}\ominus\csm{X}{\{c\}\cup(A\setminus X)}\\
D(X\cup\{c\},A\cup\{c\})=&\csm{X\cup\{c\}}{\{1\}}\ominus\csm{X\cup\{c\}}{A\setminus X}
\end{align*}
and, by condition (e) of Definition \ref{def:csm}, we see that
$$
\csm{X\cup\{c\}}{\{1\}}\ominus\csm{X\cup\{c\}}{A\setminus X}=
\csm{X}{\{c\}\cup(A\setminus X)}\ominus\csm{X}{A\setminus X}.
$$
Therefore,
\begin{align*}
D(X,A\cup\{c\})\oplus D(X\cup\{c\},A\cup\{c\})=\\
=(\csm{X}{\{1\}}\ominus\csm{X}{\{c\}\cup(A\setminus X)})
\oplus
(\csm{X}{\{c\}\cup(A\setminus X)}\ominus\csm{X}{A\setminus X})=\\
=\csm{X}{\{1\}}\ominus\csm{X}{A\setminus X}=D(X,A).
\end{align*}
\end{proof}
\begin{lemma}
\label{lemma:second}
Let $C,A,X\in\Fin(S)$ be such that $X\subseteq A$ and 
$C\cap A=\emptyset$.
Then $(D(X\cup Y,A\cup C))_{Y\subseteq C}$ is an orthogonal
family and
$$
\bigoplus_{Y\subseteq C}D(X\cup Y,A\cup C)=D(X,A).
$$
\end{lemma}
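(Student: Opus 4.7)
The plan is to prove the lemma by induction on $|C|$, using Lemma \ref{lemma:first} as the engine.

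For the base case $C=\emptyset$, the only subset $Y\subseteq C$ is $Y=\emptyset$, and the family collapses to the single term $D(X,A)$, so the claim is trivial. For the inductive step, suppose the statement holds for some $C$ with $C\cap A=\emptyset$, and let $C'=C\cup\{c\}$ where $c\notin A\cup C$. By the inductive hypothesis,
$$D(X,A)=\bigoplus_{Y\subseteq C}D(X\cup Y,A\cup C),$$
where the right-hand side is an orthogonal sum.

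Now I would apply Lemma \ref{lemma:first} to each summand. For each $Y\subseteq C$, the set $X\cup Y$ is contained in $A\cup C$ (since $X\subseteq A$ and $Y\subseteq C$), and $c\notin A\cup C$, so the hypotheses of Lemma \ref{lemma:first} are satisfied with $X\cup Y$ and $A\cup C$ in place of $X$ and $A$. This gives
$$D(X\cup Y,A\cup C)=D(X\cup Y,A\cup C')\oplus D(X\cup Y\cup\{c\},A\cup C').$$
Substituting into the inductive expression and invoking the associativity and commutativity of $\oplus$ to flatten the nested orthogonal sum yields
$$D(X,A)=\bigoplus_{Y\subseteq C}\Bigl(D(X\cup Y,A\cup C')\oplus D(X\cup Y\cup\{c\},A\cup C')\Bigr).$$
Since every subset $Y'\subseteq C'$ is uniquely either of the form $Y$ or $Y\cup\{c\}$ with $Y\subseteq C$, reindexing produces
$$D(X,A)=\bigoplus_{Y'\subseteq C'}D(X\cup Y',A\cup C'),$$
which is the assertion for $C'$. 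Note that the existence of this flattened sum immediately delivers the orthogonality of the whole family, so the two halves of the conclusion are obtained simultaneously.

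The only point requiring any care is the bookkeeping of the bijection between $2^{C'}$ and $2^{C}\sqcup 2^{C}$, together with the tacit use of the associative--commutative law for orthogonal sums in an effect algebra; neither is a substantive obstacle, so the argument is essentially a clean iterated application of Lemma \ref{lemma:first}.
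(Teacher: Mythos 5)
Your argument is correct and follows essentially the same route as the paper: induction on $|C|$, splitting the subsets of $C\cup\{c\}$ according to whether they contain $c$, and applying Lemma \ref{lemma:first} to pair the corresponding terms (the paper merely performs the pairing before invoking the induction hypothesis, whereas you invoke the hypothesis first and then refine each summand, which is the same computation). The flattening/reindexing step you flag is indeed the only bookkeeping needed and is unproblematic.
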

\begin{proof}
The proof goes by induction with respect to $|C|$.

For $C=\emptyset$, Lemma \ref{lemma:second} is trivially true.

Assume that Lemma \ref{lemma:second} holds for all $C$ 
with $|C|=n$ and let $c\in S,c\not\in A\cup C$.
Let us consider the family
$$
(D(X\cup Z,A\cup C\cup\{c\}))_{Z\subseteq C\cup\{c\}}.
$$
For every $Z\subseteq C\cup\{c\}$, either $c\in Z$ or
$c\not\in Z$, so either $Z=Y\cup\{c\}$ or $Z=Y$, for some
$Y\subseteq C$. Therefore, we can write
\begin{align*}
(D(X\cup Z,A\cup C\cup\{c\}))_{Z\subseteq C\cup\{c\}}=\\
(D(X\cup Y,A\cup C\cup\{c\}),
	D(X\cup Y\cup\{c\},A\cup C\cup\{c\}))_{Y\subseteq C}.
\end{align*}
By Lemma \ref{lemma:first},
$$
D(X\cup Y,A\cup C\cup\{c\})\oplus
	D(X\cup Y\cup\{c\},A\cup C\cup\{c\})=
D(X\cup Y,A\cup C).
$$
It only 
remains to apply the induction hypothesis to finish the proof.
\end{proof}
\begin{corollary}
\label{coro:decomposition}
For every $A\in\Fin(S)$, $(D(X,A))_{X\subseteq A}$ is a
decomposition of unit.
\end{corollary}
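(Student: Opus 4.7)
The plan is to derive the corollary as a direct specialization of Lemma~\ref{lemma:second}. Apply that lemma with the role of its ``$A$'' played by $\emptyset$, with $X=\emptyset$, and with its ``$C$'' played by the given $A$. The hypotheses $X\subseteq\emptyset$ and $C\cap\emptyset=\emptyset$ are trivially satisfied, so Lemma~\ref{lemma:second} yields that the family
$$
(D(\emptyset\cup Y,\emptyset\cup A))_{Y\subseteq A}=(D(Y,A))_{Y\subseteq A}
$$
is orthogonal and sums to $D(\emptyset,\emptyset)$.

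It remains to show $D(\emptyset,\emptyset)=1$. Unfolding the definition gives $D(\emptyset,\emptyset)=\csm{\emptyset}{\{1\}}\ominus\csm{\emptyset}{\emptyset}$. By condition~(d) of Definition~\ref{def:csm}, $\csm{\emptyset}{\{1\}}=1$, and by condition~(c), $\csm{\emptyset}{\emptyset}=0$, so $D(\emptyset,\emptyset)=1$.

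There is no real obstacle here; the substantive content is already contained in Lemma~\ref{lemma:second}, and the corollary is obtained by choosing parameters that strip away both the ``context'' $X$ and the ``ambient'' set and leave the full powerset of $A$ indexing the decomposition of the top element.
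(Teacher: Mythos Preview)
Your proof is correct and follows essentially the same approach as the paper: both specialize Lemma~\ref{lemma:second} with $X=\emptyset$ and the ambient set $\emptyset$, then compute $D(\emptyset,\emptyset)=1$ from conditions~(c) and~(d) of Definition~\ref{def:csm}. The only difference is the order of presentation.
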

\begin{proof}
Obviously,
$$D(\emptyset,\emptyset)=\csm{\emptyset}{\{1\}}\ominus
	\csm{\emptyset}{\emptyset}=1\ominus 0=1.
$$

By Lemma \ref{lemma:second},
$$
\bigoplus_{X\subseteq A}(D(\emptyset\cup X,\emptyset\cup A))
=D(\emptyset,\emptyset).
$$
\end{proof}
\begin{corollary}
\label{coro:alphaAobservable}
For every $A\in\Fin(S)$, the mapping
$\alpha_A:2^{(2^A)}\to E$ given by
$$
\alpha_A(\mathbb X)=\bigoplus_{X\in\mathbb X}D(X,A)
$$
is a simple observable.
\end{corollary}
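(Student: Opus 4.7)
The plan is to verify directly that $\alpha_A$ satisfies the three defining clauses of an observable on the finite Boolean algebra $2^{(2^A)}$, using Corollary \ref{coro:decomposition} together with elementary properties of finite orthogonal sums in effect algebras.

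First I would observe that $\alpha_A$ is well-defined. By Corollary \ref{coro:decomposition}, the family $(D(X,A))_{X\subseteq A}$ is orthogonal, so any subfamily indexed by a set $\mathbb X\subseteq 2^A$ is again orthogonal, and hence the sum $\bigoplus_{X\in\mathbb X}D(X,A)$ exists in $E$ by associativity and commutativity of $\oplus$ (axioms (E1), (E2)). In particular, $\alpha_A(\emptyset)$ is the empty $\oplus$-sum, which equals $0$.

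Next I would check $\alpha_A(2^A)=1$. This is exactly the content of Corollary \ref{coro:decomposition}: the entire family sums to $\csm{\emptyset}{\{1\}}\ominus\csm{\emptyset}{\emptyset}=1\ominus 0=1$.

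Finally I would verify the join-preservation clause: if $\mathbb X,\mathbb Y\in 2^{(2^A)}$ with $\mathbb X\wedge\mathbb Y=\emptyset$ (i.e.\ $\mathbb X$ and $\mathbb Y$ are disjoint), then $\alpha_A(\mathbb X\vee\mathbb Y)=\alpha_A(\mathbb X)\oplus\alpha_A(\mathbb Y)$. Since $\mathbb X\cup\mathbb Y$ indexes an orthogonal subfamily of $(D(X,A))_{X\subseteq A}$, one can split the $\oplus$-sum over $\mathbb X\cup\mathbb Y$ into the sum over $\mathbb X$ followed by the sum over $\mathbb Y$; by associativity and commutativity this splitting equals $\alpha_A(\mathbb X)\oplus\alpha_A(\mathbb Y)$. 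This is the only real step, but it is a routine consequence of the fact that any finite orthogonal family in an effect algebra can be summed in any order and in any grouping.

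The main conceptual obstacle — namely producing a finite orthogonal decomposition of $1$ indexed by $2^A$ — has already been discharged in Corollary \ref{coro:decomposition}; once that is in hand, the present corollary is essentially bookkeeping. I would not anticipate any serious difficulty beyond being careful that the codomain of the indexing is $2^A$ and not $A$ itself, so that $\alpha_A$ is defined on $2^{(2^A)}$ as claimed.
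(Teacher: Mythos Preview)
Your proof is correct and follows the same approach as the paper: both rely on Corollary~\ref{coro:decomposition} to see that the values on atoms form a decomposition of unit, after which the observable properties are routine. The paper merely compresses your three verifications into the remark that ``the remainder of the proof is trivial,'' whereas you spell them out.
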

\begin{proof}
The atoms of $2^{(2^A)}$ are of the form $\{X\}$, where
$X\subseteq A$.
By Corollary \ref{coro:decomposition},
$(\alpha_A(\{X\}):X\subseteq A)$ is a decomposition of unit;
the remainder of the proof is trivial.
\end{proof}

For $A,B\in\Fin(S)$ with $A\subseteq B$,
let us define mappings $g^A_B:2^{(2^A)}\to 2^{(2^B)}$
$$
g^A_B(\mathbb X)=\{X\cup C_0:X\in\mathbb X
	\text{ and }C_0\subseteq (B\setminus A)\}
$$
and let us write $\mathcal G$ for the collection of all
such mappings.

It is an easy exercise to prove that every $g^A_B\in\mathcal G$ is an injective 
homomorphism of Boolean algebras and that
$((2^{(2^A)}:A\in\Fin(S)),\mathcal G)$ is a direct family of Boolean algebras.

Let us prove that the mappings $g^A_B$ behave well with respect to the
observables $\alpha_A$ and $\alpha_B$.

\begin{lemma}
\label{lemma:d1commutes}
Let $A,B\in\Fin(S)$ with $A\subseteq B$. The diagram
\begin{center}
\includegraphics{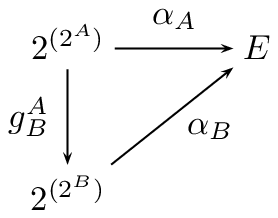}
\end{center}
commutes.
\end{lemma}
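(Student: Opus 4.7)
The diagram in question presumably asserts $\alpha_B \circ g^A_B = \alpha_A$, so the plan is to show that for every $\mathbb{X} \in 2^{(2^A)}$ the two observables agree on $\mathbb{X}$. I would begin by unwinding both sides using the definitions from Corollary \ref{coro:alphaAobservable} and the definition of $g^A_B$. The right-hand side is simply
$$
\alpha_A(\mathbb{X}) \;=\; \bigoplus_{X\in\mathbb{X}} D(X,A),
$$
while the left-hand side, after substituting the definition of $g^A_B(\mathbb{X})$, becomes a double sum indexed by $X\in\mathbb{X}$ and $C_0\subseteq B\setminus A$:
$$
\alpha_B(g^A_B(\mathbb{X})) \;=\; \bigoplus_{X\in\mathbb{X}}\;\bigoplus_{C_0\subseteq B\setminus A} D(X\cup C_0, B).
$$
A small but important observation is that the pairs $(X,C_0)$ with $X\in\mathbb{X}\subseteq 2^A$ and $C_0\subseteq B\setminus A$ produce distinct subsets $X\cup C_0$ of $B$, so decomposing the outer $\bigoplus$ over $g^A_B(\mathbb{X})$ as an iterated sum is legitimate; this is just the injectivity of $g^A_B$ restricted to its image.

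The crucial step is to recognise that for each fixed $X\in\mathbb{X}$, with $C := B\setminus A$ (disjoint from $A$ and $X\subseteq A$), Lemma \ref{lemma:second} applies and yields
$$
\bigoplus_{C_0\subseteq B\setminus A} D(X\cup C_0, B) \;=\; D(X,A),
$$
using $A\cup(B\setminus A)=B$. Plugging this back into the iterated sum gives exactly $\alpha_A(\mathbb{X})$, which establishes the commutativity.

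The proof is essentially a direct application of Lemma \ref{lemma:second}; the main mild obstacle is purely bookkeeping, namely verifying that the outer $\bigoplus$ defining $\alpha_B(g^A_B(\mathbb{X}))$ can indeed be organized as the iterated $\bigoplus$ above. This follows from the fact that each element of $g^A_B(\mathbb{X})$ has a unique presentation as $X\cup C_0$ with $X\in\mathbb{X}$ and $C_0\subseteq B\setminus A$ (since $A$ and $B\setminus A$ are disjoint), together with the associativity and commutativity of $\oplus$ on the orthogonal family guaranteed by Corollary \ref{coro:decomposition}.
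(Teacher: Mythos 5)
Your proposal is correct and follows exactly the paper's argument: rewrite $\alpha_B(g^A_B(\mathbb X))$ as the iterated sum $\bigoplus_{X\in\mathbb X}\bigoplus_{C_0\subseteq B\setminus A}D(X\cup C_0,B)$ and collapse the inner sum to $D(X,A)$ via Lemma \ref{lemma:second} with $C=B\setminus A$. The extra remark about the unique presentation $X\cup C_0$ is a harmless piece of bookkeeping the paper leaves implicit.
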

\begin{proof}
For all $\mathbb X\in 2^{(2^A)}$,
\begin{align*}
\alpha_B(g^A_B(\mathbb X))=
	\alpha_B(\{X\cup C_0:X\in\mathbb X
	\text{ and }C_0\subseteq (B\setminus A)\})=\\
=\bigoplus(
	D(X\cup C_0,B):X\in\mathbb X
	\text{ and }C_0\subseteq (B\setminus A)
	)=\\
=\bigoplus_{X\in\mathbb X}\Bigl(
\bigoplus_{C_0\subseteq (B\setminus A)}
D(X\cup C_0,B)\Bigr)
\end{align*}
Put $Y:=C_0$, $C:=B\setminus A$; by Lemma \ref{lemma:second},
$$
\bigoplus_{C_0\subseteq (B\setminus A)}
D(X\cup C_0,B)=D(X,A).
$$
Therefore,
$$
\alpha_B(g^A_B(\mathbb X))=\bigoplus_{X\in\mathbb X} D(X,A)=
\alpha_A(\mathbb X)
$$
and the diagram commutes.
\end{proof}

\begin{corollary}
\label{coro:simplerange}
For every $B\in\Fin(S)$, $B$ is a subset of the range of
$\alpha_B$.
\end{corollary}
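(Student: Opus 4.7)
The plan is to show, given any $c \in B$, that $c$ arises as $\alpha_B(\mathbb{X}_c)$ for a natural choice of $\mathbb{X}_c \in 2^{(2^B)}$. The guess, motivated by thinking of $D(X,B)$ as the ``atom where the outcomes in $X$ happen and those in $B\setminus X$ do not,'' is
$$
\mathbb{X}_c := \{X \subseteq B : c \in X\}.
$$

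To verify this, I would first compute $D(\{c\},\{c\})$. By Lemma \ref{lemma:c1isc} we have $\csm{\{c\}}{\{1\}} = c$, and by condition (c) of Definition \ref{def:csm}, $\csm{\{c\}}{\emptyset} = 0$; hence $D(\{c\},\{c\}) = c \ominus 0 = c$. Next I would apply Lemma \ref{lemma:second} with the choices $X := \{c\}$, $A := \{c\}$, and $C := B \setminus \{c\}$ (which is allowed because $c \in B$ ensures $X \subseteq A$ and the condition $C \cap A = \emptyset$ is immediate). Since $A \cup C = B$, that lemma yields
$$
\bigoplus_{Y \subseteq B \setminus \{c\}} D(\{c\} \cup Y, B) = D(\{c\},\{c\}) = c.
$$

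Finally, the indexing set $\{\{c\}\cup Y : Y \subseteq B\setminus\{c\}\}$ is exactly $\mathbb{X}_c$, so the displayed equation reads $\alpha_B(\mathbb{X}_c) = c$ by the definition of $\alpha_B$ in Corollary \ref{coro:alphaAobservable}. As $c \in B$ was arbitrary, $B \subseteq \alpha_B(2^{(2^B)})$.

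There is no real obstacle here: the argument is a direct instance of Lemma \ref{lemma:second}, with the only minor point being the bookkeeping that identifies $\{\{c\}\cup Y : Y\subseteq B\setminus\{c\}\}$ with the principal up-set $\mathbb{X}_c$ of $c$ in $2^B$.
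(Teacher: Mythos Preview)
Your proof is correct and follows essentially the same approach as the paper. The paper phrases the argument via Lemma~\ref{lemma:d1commutes}, writing your set $\mathbb{X}_c$ as $g^{\{c\}}_B(\{\{c\}\})$ and using the commutativity of the diagram to reduce to $\alpha_{\{c\}}(\{\{c\}\})=D(\{c\},\{c\})=c$; since Lemma~\ref{lemma:d1commutes} is itself an application of Lemma~\ref{lemma:second}, your direct appeal to Lemma~\ref{lemma:second} simply unpacks that step, and the two arguments coincide on the same element $\mathbb{X}_c$ of $2^{(2^B)}$.
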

\begin{proof}
We need to prove that every $a\in B$ is an element of the range
of $\alpha_B$. For $B=\emptyset$, this is trivial.

Suppose that
$B$ is nonempty and let $a\in B$. Let $A=\{a\}$.
and let $X=g^A_B(\{\{a\}\})$.
By
Lemma \ref{lemma:d1commutes}, 
$$
\alpha_B(X)=\alpha_B(g^A_B(\{\{a\}\}))=\alpha_A(\{\{a\}\}),
$$
and we see that, by (c) of Definition \ref{def:csm} and by Lemma 
\ref{lemma:c1isc}
$$
\alpha_A(\{\{a\}\})=\alpha_{\{a\}}(\{\{a\}\})=D(\{a\},\{a\})=
\D{\{a\}}{\{a\}}=a\ominus 0=a.
$$
\end{proof}

\begin{theorem}
\label{thm:obsfromcsm}
Let $E$ be an effect algebra, let $S\subseteq E$.
If $S\cup\{1\}$ admits a compatibility
support mapping, then $S$ is coexistent.
\end{theorem}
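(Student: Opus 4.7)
The plan is to push the compatible family of simple observables $(\alpha_A)_{A\in\Fin(S)}$ through the directed colimit of the system $((2^{(2^A)})_{A\in\Fin(S)},\mathcal G)$ and thereby produce a single observable whose range contains $S$.

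First, I would form the Boolean algebra $\tilde B:=\varinjlim_{A\in\Fin(S)}2^{(2^A)}$. The poset $\Fin(S)$ is directed under inclusion and every transition map $g^A_B$ is an injective homomorphism of Boolean algebras, so this colimit exists in the category of Boolean algebras and is equipped with canonical (injective) colimit maps $g^A:2^{(2^A)}\to\tilde B$ satisfying $g^{B}\circ g^A_B=g^A$ whenever $A\subseteq B$. Concretely one can realize $\tilde B$ as the quotient of the disjoint union $\bigsqcup_A 2^{(2^A)}$ modulo the equivalence generated by $\mathbb X\sim g^A_B(\mathbb X)$, with Boolean operations computed after transporting any pair of elements into a common $2^{(2^A)}$ (which is possible by directedness).

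Second, I would invoke the universal property to obtain the observable. By Lemma \ref{lemma:d1commutes}, the family $(\alpha_A)$ is a cocone over the directed system with apex $E$: $\alpha_B\circ g^A_B=\alpha_A$ whenever $A\subseteq B$. Observables are precisely the morphisms of effect algebras from a Boolean algebra into $E$, so the cocone factors through the colimit as a unique observable $\alpha:\tilde B\to E$ with $\alpha\circ g^A=\alpha_A$ for every $A\in\Fin(S)$. Well-definedness of $\alpha$ as a map reduces, by directedness, to checking that $\alpha_B\circ g^A_B=\alpha_A$, which is Lemma \ref{lemma:d1commutes}; preservation of $\oplus$ reduces to the fact that any two elements of $\tilde B$ may be simultaneously pulled back into some $2^{(2^A)}$ where $\alpha_A$ is already known to be an observable.

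Finally, I would check $S\subseteq\alpha(\tilde B)$. For each $a\in S$ set $A:=\{a\}$ and $\xi:=g^{\{a\}}(\{\{a\}\})\in\tilde B$; then $\alpha(\xi)=\alpha_{\{a\}}(\{\{a\}\})=D(\{a\},\{a\})=a$ by Corollary \ref{coro:simplerange} (and Lemma \ref{lemma:c1isc}). Hence $(\tilde B,\alpha)$ witnesses that $S$ is coexistent. The main obstacle in this argument is the second step: one has to verify that the Boolean-algebra colimit both exists and supports an effect-algebra morphism assembled from the $\alpha_A$. This is standard category theory (Boolean algebras form a variety and hence admit directed colimits computed on underlying sets), but it must be spelled out carefully since effect-algebra morphisms out of a colimit are characterized by a universal property in the category of effect algebras rather than of Boolean algebras.
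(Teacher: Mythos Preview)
Your proposal is correct and follows essentially the same route as the paper: both construct the directed colimit $\varinjlim 2^{(2^A)}$ (the paper calls it $F_B(S)$ and builds it explicitly as a quotient of a disjoint union), then use Lemma~\ref{lemma:d1commutes} to glue the $\alpha_A$'s into a single observable, and finally invoke Corollary~\ref{coro:simplerange} to see that $S$ lies in its range. The only difference is packaging: you phrase the gluing via the universal property of the colimit, while the paper verifies well-definedness and the observable axioms by hand---which is exactly the ``spelling out'' you flag as the main obstacle.
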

\begin{proof}
Suppose that $S\cup\{1\}$ admits a compatibility support mapping.
Let us construct
$F_B(S)$ as the direct limit of the direct family $(2^{2^A}:A\in\Fin(S))$,
equipped with morphisms of the type $g^A_B$.
After that, we shall define an observable $\alpha:F_B(S)\to E$.

Consider the set
$$
\Gamma_S=\bigcup_{A\in\Fin(S)}\{(\mathbb X,A):\mathbb X\subseteq 2^A\}
$$
and define on it a binary relation $\equiv$ by
$(\mathbb X,A)\equiv(\mathbb Y,B)$ if and only if
$g^A_{A\cup B}(\mathbb X)=g^B_{A\cup B}(\mathbb Y)$, that means
$$
\{X\cup C_A:X\in\mathbb X\and C_A\subseteq A\cup B\setminus A\}=
\{Y\cup C_B:Y\in\mathbb Y\and C_B\subseteq A\cup B\setminus B\}.
$$
Then $F_B(S)=\Gamma_S/\equiv$ and the operations
on $F_B(S)$ are defined by
$$
[(\mathbb X,A)]_\equiv\vee[(\mathbb Y,B)]_\equiv=
[(g^A_{A\cup B}(\mathbb X)\cup g^B_{A\cup B}(\mathbb Y),A\cup B)]_\equiv
$$
and similarly for the other operations. Then
$F_B(S)$ is a direct limit of Boolean algebras, hence a Boolean algebra.

Let $\alpha_S:F_B(S)\to E$ be a mapping given by the rule 
$\alpha_S([(\mathbb X,A)]_\equiv)=\alpha_A(\mathbb X)$.
We shall prove that $\alpha_S$ is an observable.

Let us prove $\alpha_S$ is well-defined.
Suppose that $(\mathbb X,A)\equiv (\mathbb Y,B)$, that means,
$g^A_{A\cup B}(\mathbb X)=g^B_{A\cup B}(\mathbb Y)$.
By Lemma \ref{lemma:d1commutes},
$$
\alpha_A(\mathbb X)=\alpha_{A\cup B}(g^A_{A \cup B}(\mathbb X))
$$
and
$$
\alpha_B(\mathbb Y)=\alpha_{A\cup B}(g^B_{A \cup B}(\mathbb Y)),
$$
hence $\alpha_S$ is a well-defined mapping.

Let us prove that $\alpha_S$ is an observable.
The bounds of the Boolean algebra 
$F_B(S)$ are $[(\emptyset,A)]_\equiv$ and
$[(2^A,A)]_\equiv$, where $A\in Fin(S)$. Obviously,
by Corollary \ref{coro:alphaAobservable},
$$
\alpha_S([(\emptyset,A)]_\equiv)=\alpha_A(\emptyset)=0
$$
and
$$
\alpha_S([(2^A,A)]_\equiv)=\alpha_A(2^A)=1.
$$
Let $[(\mathbb X,A)]_\equiv$ and $[(\mathbb Y,B)_\equiv]$ be disjoint elements 
of $F_B(S)$, that is, 
$g^A_{A\cup B}(\mathbb X)\cap g^B_{A\cup B}(\mathbb Y)=\emptyset$.
Then
\begin{align*}
\alpha_S([(\mathbb X,A)]_\equiv\vee [(\mathbb Y,B)]_\equiv)=
\alpha_S([g^A_{A\cup B}(\mathbb X)
	\cup g^B_{A\cup B}(\mathbb Y),A\cup B]_\equiv)=\\
=\alpha_{A\cup B}
	(g^A_{A\cup B}(\mathbb X)\cup g^B_{A\cup B}(\mathbb Y)).
\end{align*}
Since $\alpha_{A\cup B}$ is an observable,
$$
\alpha_{A\cup B}
	(g^A_{A\cup B}(\mathbb X)\cup g^B_{A\cup B}(\mathbb Y))=
\alpha_{A\cup B}
	(g^A_{A\cup B}(\mathbb X))\oplus 
	\alpha_{A\cup B}(g^B_{A\cup B}(\mathbb Y)).
$$
It remains to observe that
$$
\alpha_{A\cup B}
	(g^A_{A\cup B}(\mathbb X))=
\alpha_S([(\mathbb X,A)]_\equiv)
$$
and that
$$
\alpha_{A\cup B}
	(g^B_{A\cup B}(\mathbb Y))=
\alpha_S([(\mathbb Y,B)]_\equiv).
$$

Let us prove that the range of $\alpha_S$ includes $S$. Let $a\in S$.
By Corollary \ref{coro:simplerange}, the range of $\alpha_{\{a\}}$ includes
$a$ and, by an obvious direct limit argument, the range of $\alpha_{\{a\}}$ is a subset of the range
of $\alpha_S$. 
\end{proof}

\section{Compatibility support mappings from observables}

The aim of the single theorem of this section is to prove that
every subset $S$ of the range of an observable admits a strong
compatibility support mapping.

\begin{theorem}\label{thm:csmfromobs}
For every coexistent subset $S$ of an effect algebra
$E$, $S\cup\{1\}$ admits a strong compatibility
support mapping. 
\end{theorem}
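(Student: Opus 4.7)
The plan is to pull the MV-algebra construction of Example \ref{ex:joinmeet} back along the observable supplied by coexistence. Because $S$ is coexistent, I would fix a Boolean algebra $B$ and an observable $\alpha\colon B\to E$ with $S\subseteq\alpha(B)$, then for each $s\in S\cup\{1\}$ choose some $\tilde s\in B$ with $\alpha(\tilde s)=s$, taking $\tilde 1=1_B$. For $U,V\in\Fin(S\cup\{1\})$ I would then define
$$
\csm{U}{V}:=\alpha\Bigl(\bigwedge_{u\in U}\tilde u\,\wedge\,\bigvee_{v\in V}\tilde v\Bigr),
$$
with the conventions $\bigwedge\emptyset=1_B$ and $\bigvee\emptyset=0_B$.

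The next step is to observe that every Boolean algebra is an MV-effect algebra, so Example \ref{ex:joinmeet} applied inside $B$ already tells us that the assignment $(U',V')\mapsto\bigwedge U'\wedge\bigvee V'$ is a strong compatibility support mapping on $B$. The task therefore reduces to verifying that $\alpha$ transports conditions (a)--(d) and (e*) from $B$ down to $E$. This will be essentially automatic because $\alpha$ is a morphism of effect algebras: it preserves $0$ and $1$, it is monotone, and whenever $x\leq y$ in $B$ the identity $\alpha(y\ominus x)=\alpha(y)\ominus\alpha(x)$ holds, since $y=x\oplus(y\ominus x)$ in $B$ gives $\alpha(y)=\alpha(x)\oplus\alpha(y\ominus x)$. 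Monotonicity will handle (a) and (b), the empty conventions will give (c) and (d), and applying $\alpha$ to both sides of the MV-identity from Example \ref{ex:joinmeet} will yield (e*).

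The only delicacy I foresee is bookkeeping: two distinct elements of $S$ might share a preimage in $B$, and the ``new'' element $c$ appearing in (e*) may already lie in $U$ or $V$. Neither issue should cause real trouble, since the construction depends only on the meets and joins actually taken in $B$. The one thing I would want to double-check is that the computation in Example \ref{ex:joinmeet} remains valid without the hypothesis $c\notin U\cup V$; but that calculation is purely MV-algebraic, and the degenerate cases ($c\in V$ collapses both sides to $0$, while $c\in U$ forces both sides to coincide via $\bigwedge U\leq c$) present no obstruction. I expect this small check, rather than any deeper obstacle, to be the only non-routine point.
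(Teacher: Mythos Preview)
Your proposal is correct and follows essentially the same route as the paper: the paper also chooses preimages $p_a\in\alpha^{-1}(a)$ for each $a\in S\cup\{1\}$, defines $\csm{U}{V}=\alpha\bigl((\bigwedge_{a\in U}p_a)\wedge(\bigvee_{b\in V}p_b)\bigr)$, and then verifies (e*) by a direct Boolean-algebra computation that is exactly the MV-identity you invoke from Example~\ref{ex:joinmeet}. Your explicit stipulation $\tilde 1=1_B$ is in fact a helpful addition, since condition~(b) depends on it and the paper leaves this implicit.
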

\begin{proof}
Let $B$ be a Boolean algebra and let 
$\alpha:B\to E$ be an observable, let $S$ be a subset of the
range of $\alpha$.

For every $a\in S\cup\{1\}$, fix an element
$p_a\in\alpha^{-1}(a)$ and define
$$
\csm{U}{V}=\alpha((\bigwedge_{a\in U} p_a)\wedge(\bigvee_{b\in V}p_b)).
$$

Let us check the condition in the definition of a strong compatibility
support mapping.
Let $c\not\in U,V$. Then
\begin{align*}
\csm{U\cup\{c\}}{\{1\}}\ominus\csm{U\cup\{c\}}{V}=\\
=\alpha((\bigwedge_{a\in U} p_a)\wedge p_c)\ominus
\alpha(((\bigwedge_{a\in U} p_a)\wedge p_c)\wedge(\bigvee_{b\in V}p_b)).
\end{align*}
To simplify the matters, write
\begin{align*}
m_U=(\bigwedge_{a\in U} p_a)\\
j_V=(\bigvee_{b\in V}p_b)
\end{align*}
We can write
\begin{align*}
\alpha((\bigwedge_{a\in U} p_a)\wedge p_c)\ominus
\alpha(((\bigwedge_{a\in U} p_a)\wedge p_c)\wedge(\bigvee_{b\in V}p_b))=
\alpha(m_U\wedge p_c)\ominus\alpha(m_U\wedge p_c\wedge j_V)=\\
=\alpha((m_U\wedge p_c)\ominus(m_U\wedge p_c\wedge j_V))\\
\end{align*}
Similarly,
$$
\csm{U}{V\cup\{c\}}\ominus\csm{U}{V}=
\alpha((m_U\wedge(p_c\vee j_V))\ominus(m_U\wedge j_V)).
$$
Since $B$ is a Boolean algebra,
\begin{align*}
(m_U\wedge p_c)\ominus(m_U\wedge p_c\wedge j_V)=
(m_U\wedge(p_c\vee j_V))\ominus(m_U\wedge j_V)
\end{align*}

The remaining conditions are trivial to check.
\end{proof}

Let us note that, if we start with a non-strong compatibility support mapping,
apply Theorem \ref{thm:obsfromcsm} to construct an observable and then
apply Theorem \ref{thm:csmfromobs} to construct a compatibility support mapping,
we cannot obtain the compatibility support mapping we started with, since
Theorem \ref{thm:csmfromobs} always produces a strong compatibility support mapping.

\section{Properties of strong compatibility support mappings}

The aim of this section is to prove that several properties of the Example
\ref{ex:joinmeet} are valid for all strong compatibility support mappings. 
It
remains open whether and which of these properties are valid for all
compatibility support mappings.

The main vehicle here is Proposition \ref{prop:csmfromD}, that is interesting
in its own right: it shows that, for a given $S$, every strong compatibility
support mapping on $S$ is determined by its $D(~.~,~.~)$.

\begin{assumption}
In this section, we assume the following.
\begin{itemize}
\item $E$ is an effect algebra.
\item $S$ is a subset of $E$ with $1\in S$.
\item $\csm{~.~}{~.~}:\Fin(S)\times \Fin(S)\to S$ is a strong 
compatibility support mapping.
\end{itemize}
\end{assumption}

\begin{lemma}
\label{lemma:nondisjoint}
If $U,V$ are not disjoint, then $\csm{U}{V}=\csm{U}{\{1\}}$.
\end{lemma}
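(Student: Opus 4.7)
The plan is to exploit the fact that (e*) holds for \emph{all} $c$, including those already in $U$ or $V$, which is precisely where (e*) is stronger than (e).

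Pick any $c\in U\cap V$ and set $V':=V\setminus\{c\}$. I will apply (e*) with the pair $(U,V')$ and this element $c$. Because $c\in U$ we have $U\cup\{c\}=U$, and because $c\notin V'$ we have $V'\cup\{c\}=V$. So (e*) collapses to
\[
\csm{U}{\{1\}}\ominus\csm{U}{V'}=\csm{U}{V}\ominus\csm{U}{V'}.
\]
Both differences are defined: by (b) we have $\csm{U}{V'}\leq\csm{U}{\{1\}}$, and by (a) together with (b) we have $\csm{U}{V'}\leq\csm{U}{V}\leq\csm{U}{\{1\}}$.

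Finally I invoke cancellativity of the effect algebra: adding $\csm{U}{V'}$ to both sides yields $\csm{U}{\{1\}}=\csm{U}{V}$. There is no real obstacle here; the only conceptual point is that (e*), unlike (e), places no restriction on $c$, so we are free to feed it the element $c$ that already belongs to $U$ and thereby force $U\cup\{c\}$ to simplify back to $U$.
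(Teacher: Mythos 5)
Your proof is correct and rests on the same key idea as the paper's: applying (e*) with an element $c\in U\cap V$, which is exactly where (e*) goes beyond (e). The paper's own proof is a slightly more economical instantiation — it applies (e*) with $V$ itself rather than $V\setminus\{c\}$, so that $U\cup\{c\}=U$ and $V\cup\{c\}=V$ simultaneously and the right-hand side becomes $\csm{U}{V}\ominus\csm{U}{V}=0$ at once, dispensing with your final cancellation step — but this is a cosmetic difference, not a different argument.
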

\begin{proof}
Let $c\in U\cap V$. This implies that $U\cup\{c\}=U$ and
$V\cup\{c\}=V$. Therefore, by (e*),
$$
\csm{U}{\{1\}}\ominus\csm{U}{V}=
	\csm{U}{V}\ominus\csm{U}{V}=0,
$$
hence $\csm{U}{V}=\csm{U}{\{1\}}$.
\end{proof}
\begin{lemma}
\label{prop:trop}
$\csm{U\cup\{c\}}{\{1\}}=\csm{U}{\{c\}}$.
\end{lemma}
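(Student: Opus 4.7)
The plan is to obtain this as an almost immediate consequence of the strong axiom (e*), specialized at $V=\emptyset$. Recall that (e*) guarantees, for every $c\in S$ and every $U\in\Fin(S)$,
$$\csm{U\cup\{c\}}{\{1\}}\ominus\csm{U\cup\{c\}}{V}=\csm{U}{V\cup\{c\}}\ominus\csm{U}{V},$$
with no disjointness restriction on $c$, $U$, $V$.

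First I would plug in $V=\emptyset$ on both sides. The two $\ominus$-subtrahends then become $\csm{U\cup\{c\}}{\emptyset}$ and $\csm{U}{\emptyset}$, which by axiom (c) of Definition \ref{def:csm} are both equal to $0$. Since in any effect algebra $x\ominus 0=x$ (this follows from $0\oplus x=x$ together with the defining property of $\ominus$), the equation collapses to
$$\csm{U\cup\{c\}}{\{1\}}=\csm{U}{\{c\}},$$
which is exactly the claim.

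There is no real obstacle; the only thing to be a little careful about is that we are using the \emph{strong} version (e*) rather than (e), because (e) carries the side condition $c\notin U\cup V$. For the special case $c\in U$ one could alternatively invoke Lemma \ref{lemma:nondisjoint} (since then $U\cup\{c\}=U$ and $U\cap\{c\}\ne\emptyset$), but with (e*) in hand this distinction is unnecessary and the single application above handles all cases uniformly.
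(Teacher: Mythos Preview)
Your proof is correct and follows exactly the same approach as the paper: substitute $V=\emptyset$ into (e*), invoke condition (c) to kill both subtrahends, and read off the equality. Your added remarks about $x\ominus 0=x$ and the necessity of (e*) over (e) are accurate but not essential to the argument.
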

\begin{proof}
Put $V=\emptyset$ in (e*):
$$
\csm{U\cup\{c\}}{\{1\}}\ominus\csm{U\cup\{c\}}{\emptyset}=
	\csm{U}{\{c\}}\ominus\csm{U}{\emptyset}.
$$
By condition (c), 
$\csm{U\cup\{c\}}{\emptyset}=\csm{U}{\emptyset}=0$, therefore
$$
\csm{U\cup\{c\}}{\{1\}}=\csm{U}{\{c\}}.
$$
\end{proof}

\begin{proposition}
\label{prop:csmfromD}
Let $U,V\subseteq S$.
\begin{enumerate}
\item If $U\cap V\not =\emptyset$,
then $\csm{U}{V}=\csm{U}{\{1\}}=D(U,U)$.
\item If
$U\cap V=\emptyset$,
then
$$
\csm{U}{V}=\bigoplus_{\emptyset\neq Y\subseteq V}
	D(U\cup Y,U\cup V).
$$
\end{enumerate}
\end{proposition}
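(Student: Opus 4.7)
The proposition splits naturally into the two cases stated, and both turn out to be direct consequences of the work already done in the previous section together with the two preceding lemmas of this section. The plan is to handle (1) by a one-line reduction, and (2) by specializing Lemma \ref{lemma:second} and peeling off the empty-set term.

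For part (1), assume $U\cap V\neq\emptyset$. Lemma \ref{lemma:nondisjoint} (which uses the strong condition (e*)) immediately gives $\csm{U}{V}=\csm{U}{\{1\}}$. It then remains only to observe that, by the definition of $D$ and condition (c),
\[
D(U,U)=\csm{U}{\{1\}}\ominus\csm{U}{U\setminus U}=\csm{U}{\{1\}}\ominus\csm{U}{\emptyset}=\csm{U}{\{1\}}\ominus 0=\csm{U}{\{1\}}.
\]
This finishes (1).

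For part (2), assume $U\cap V=\emptyset$. The key observation is that Lemma \ref{lemma:second} applies verbatim with $X:=U$, $A:=U$, $C:=V$: the hypothesis $X\subseteq A$ holds trivially, and $C\cap A=V\cap U=\emptyset$ by assumption. It yields that the family $(D(U\cup Y,U\cup V))_{Y\subseteq V}$ is orthogonal and
\[
\bigoplus_{Y\subseteq V}D(U\cup Y,U\cup V)=D(U,U)=\csm{U}{\{1\}},
\]
using the computation from (1) above. Now separate the $Y=\emptyset$ term; because $U\cap V=\emptyset$ we have $(U\cup V)\setminus U=V$, so
\[
D(U,U\cup V)=\csm{U}{\{1\}}\ominus\csm{U}{V}.
\]
Substituting into the previous equation gives
\[
\csm{U}{\{1\}}=\bigl(\csm{U}{\{1\}}\ominus\csm{U}{V}\bigr)\oplus\bigoplus_{\emptyset\neq Y\subseteq V}D(U\cup Y,U\cup V).
\]
Cancelling $\csm{U}{\{1\}}\ominus\csm{U}{V}$ (effect algebras are cancellative) produces the desired formula $\csm{U}{V}=\bigoplus_{\emptyset\neq Y\subseteq V}D(U\cup Y,U\cup V)$.

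There is no real obstacle here — the entire content is packaged in Lemma \ref{lemma:second} and Lemma \ref{lemma:nondisjoint}. The only point that warrants a sentence of care is the final cancellation step, which should be justified explicitly by noting that the orthogonal sum provided by Lemma \ref{lemma:second} guarantees that $\csm{U}{\{1\}}\ominus\csm{U}{V}$ is orthogonal to $\bigoplus_{\emptyset\neq Y\subseteq V}D(U\cup Y,U\cup V)$, so cancellativity legitimately extracts $\csm{U}{V}$ from both sides.
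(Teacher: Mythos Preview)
Your proof is correct and follows essentially the same route as the paper: part (1) is identical, and part (2) specializes Lemma \ref{lemma:second} with $X=A=U$, $C=V$, then isolates the $Y=\emptyset$ term $D(U,U\cup V)=\csm{U}{\{1\}}\ominus\csm{U}{V}$. The only cosmetic difference is that the paper subtracts $D(U,U\cup V)$ from $D(U,U)$ and simplifies $(\csm{U}{\{1\}})\ominus(\csm{U}{\{1\}}\ominus\csm{U}{V})=\csm{U}{V}$ directly, whereas you write the full $\oplus$-equation and invoke cancellativity; both amount to the same effect-algebra identity.
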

\begin{proof}~

(1)
By Proposition \ref{lemma:nondisjoint},
$\csm{U}{V}=\csm{U}{\{1\}}$ and
$$
D(U,U)=\csm{U}{\{1\}}\ominus\csm{U}{\emptyset}=
	\csm{U}{\{1\}}\ominus 0=\csm{U}{\{1\}}.
$$

(2) By Lemma \ref{lemma:second},
$$
D(U,U)=\bigoplus_{Y\subseteq V}
	D(U\cup Y,U\cup V).
$$
Therefore,
$$
D(U,U)\ominus D(U,U\cup V)=
	\bigoplus_{\emptyset\neq Y\subseteq V}
		D(U\cup Y,U\cup V).
$$
Moreover,
\begin{align*}
D(U,U)\ominus D(U,U\cup V)=
	(\csm{U}{\{1\}}\ominus\csm{U}{\emptyset})\ominus
	(\csm{U}{\{1\}}\ominus\csm{U}{V})=\\
	=\csm{U}{V}\ominus\csm{U}{\emptyset}=\csm{U}{V}\ominus 0=
	\csm{U}{V}.
\end{align*}
\end{proof}

\begin{proposition}
\label{prop:formera}
If $U_1\subseteq U_2$, then $\csm{U_1}{V}\geq\csm{U_2}{V}$.
\end{proposition}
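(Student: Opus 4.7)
The plan is to induct on $|U_2\setminus U_1|$, so it suffices to prove that for every $c\notin U_1$ we have
$$\csm{U_1\cup\{c\}}{V}\leq\csm{U_1}{V}.$$
I would split into two cases depending on whether $c\in V$.

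If $c\in V$, then $c\in (U_1\cup\{c\})\cap V$, so Lemma~\ref{lemma:nondisjoint} gives $\csm{U_1\cup\{c\}}{V}=\csm{U_1\cup\{c\}}{\{1\}}$, Lemma~\ref{prop:trop} rewrites this as $\csm{U_1}{\{c\}}$, and condition (a) (with $\{c\}\subseteq V$) yields $\csm{U_1}{\{c\}}\leq\csm{U_1}{V}$, closing this case.

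If $c\notin V$, then $c\notin U_1\cup V$, so condition (e*) gives
$$\csm{U_1\cup\{c\}}{\{1\}}\ominus\csm{U_1\cup\{c\}}{V}=\csm{U_1}{V\cup\{c\}}\ominus\csm{U_1}{V}.$$
Applying Lemma~\ref{prop:trop} to the leftmost term and abbreviating $b=\csm{U_1}{\{c\}}$, $e=\csm{U_1\cup\{c\}}{V}$, $d=\csm{U_1}{V\cup\{c\}}$ and $a=\csm{U_1}{V}$, the identity reads $b\ominus e=d\ominus a$. Writing $s$ for this common value, we have $e\oplus s=b$ and $a\oplus s=d$, so by cancellativity of the effect algebra the desired inequality $e\leq a$ is equivalent to $b\leq d$. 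But $\{c\}\subseteq V\cup\{c\}$, so $b=\csm{U_1}{\{c\}}\leq\csm{U_1}{V\cup\{c\}}=d$ by condition (a).

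The only slightly subtle step is the case $c\notin V$: one must recognize that (e*), combined with Lemma~\ref{prop:trop}, reformulates the target inequality as the evident monotonicity instance $\csm{U_1}{\{c\}}\leq\csm{U_1}{V\cup\{c\}}$. Everything else is either the inductive reduction or a direct appeal to the axioms and the previous two lemmas.
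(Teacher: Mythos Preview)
Your proof is correct, but it follows a genuinely different and more elementary route than the paper's.

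The paper argues via Proposition~\ref{prop:csmfromD}, which expresses $\csm{U}{V}$ as an $\oplus$-sum of $D$-values, and then splits into three cases according to whether $U_1\cap V$ and $U_2\cap V$ are empty. In each case the inequality is obtained by exhibiting the $D$-summands for $\csm{U_2}{V}$ as a subfamily (via Lemma~\ref{lemma:second}) of those for $\csm{U_1}{V}$; Case~3 in particular involves a nontrivial reindexing.

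Your argument bypasses the $D$-representation entirely: you induct on $|U_2\setminus U_1|$ and handle the one-step case directly from the axioms together with Lemmas~\ref{lemma:nondisjoint} and~\ref{prop:trop}. The key reduction in your Case~2 --- that $b\ominus e=d\ominus a$ together with $b\leq d$ forces $e\leq a$ --- is a clean use of order-cancellation in effect algebras (if $e\oplus s\leq a\oplus s$ then $e\leq a$). This is shorter and avoids the combinatorial bookkeeping; the paper's approach, on the other hand, showcases Proposition~\ref{prop:csmfromD} as the organizing principle of the section. A minor remark: in your Case~2 you actually only need condition~(e), not~(e*); the strong hypothesis enters solely through Lemma~\ref{lemma:nondisjoint} in Case~1.
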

\begin{proof}~

(Case 1)
Suppose that $U_1\cap V\neq\emptyset$. Then $U_2\cap V\neq\emptyset$.
By Proposition \ref{prop:csmfromD} and Lemma \ref{lemma:second},
$$
\csm{U_1}{V}=D(U_1,U_1)=
	\bigoplus_{Y\subseteq U_2\setminus U_1}D(U_1\cup Y,U_2)\geq
	D(U_2,U_2)=\csm{U_2}{V}.
$$

(Case 2)
Suppose that $U_2\cap V=\emptyset$. Then $U_1\cap V=\emptyset$.
By Proposition
\ref{prop:csmfromD},
$$
\csm{U_1}{V}=\bigoplus_{\emptyset\neq Y\subseteq V}
	D(U_1\cup Y,U_1\cup V).
$$
By Lemma \ref{lemma:second}, 
for every $\emptyset\neq Y\subseteq V$,
$$
D(U_1\cup Y,U_1\cup V)=\bigoplus_{W\subseteq U_2\setminus U_1}
D(U_1\cup Y\cup W,U_2\cup V).
$$
Obviously (put $W=U_2\setminus U_1$), this implies
that
$$
D(U_1\cup Y,U_1\cup V)\geq D(U_2\cup Y,U_2\cup V),
$$
hence we may write
$$
\csm{U_1}{V}=\bigoplus_{\emptyset\neq Y\subseteq V}
	D(U_1\cup Y,U_1\cup V)\geq
\bigoplus_{\emptyset\neq Y\subseteq V}
	D(U_2\cup Y,U_2\cup V).
$$
It remains to apply Proposition \ref{prop:csmfromD} again:
$$
\bigoplus_{\emptyset\neq Y\subseteq V}
	D(U_2\cup Y,U_2\cup V)=\csm{U_2}{V}.
$$

(Case 3) Suppose that $U_1\cap V=\emptyset$ and
$U_2\cap V\neq\emptyset$. By Proposition \ref{prop:csmfromD},
$$
\csm{U_1}{V}=\bigoplus_{\emptyset\neq Y\subseteq V}D(U_1\cup Y,U_1\cup V).
$$
By Lemma \ref{lemma:second},
$$
D(U_1\cup Y,U_1\cup V)=
	\bigoplus_{W\subseteq U_2\setminus(U_1\cup V)}
	D(U_1\cup W\cup Y,U_2\cup V)
$$
We can put $W=U_2\setminus(U_1\cup V)$, proving that
$$
D(U_1\cup Y,U_1\cup V)\geq D( (U_2\setminus V)\cup Y,U_2\cup V).
$$
Therefore,
\begin{align*}
\csm{U_1}{V}=
	\bigoplus_{\emptyset\neq Y\subseteq V}
	D(U_1\cup Y,U_1\cup V)\geq
	\bigoplus_{\emptyset\neq Y\subseteq V}
	D( (U_2\setminus V)\cup Y,U_2\cup V)\geq\\
	\bigoplus_{V\cap U_2\subseteq Y\subseteq V}
	D( (U_2\setminus V)\cup Y,U_2\cup V).
\end{align*}
For every $V\cap U_2\subseteq Y\subseteq V$, there is exactly one
$Z\subseteq V\setminus U_2$ such that
$$
(U_2\setminus V)\cup Y=U_2\cup Z.
$$
Thus, we can rewrite
$$
\bigoplus_{V\cap U_2\subseteq Y\subseteq V}
	D( (U_2\setminus V)\cup Y,U_2\cup V)=
\bigoplus_{Z\subseteq V\setminus U_2}
	D(U_2\cup Z,U_2\cup V).
$$
By Lemma \ref{lemma:second} and Proposition \ref{prop:csmfromD},
$$
\bigoplus_{Z\subseteq V\setminus U_2}
	D(U_2\cup Z,U_2\cup V)=D(U_2,U_2)=\csm{U_2}{V}.
$$
\end{proof}

\begin{proposition}
\label{prop:lowbound}
$\csm{U}{\{1\}}$ is a lower bound of $U$.
\end{proposition}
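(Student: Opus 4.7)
The plan is to reduce the claim directly to two results already in hand: Proposition \ref{prop:formera} (monotonicity of $\csm{~.~}{~.~}$ in its first coordinate, which reverses order) and Lemma \ref{lemma:c1isc} (which says $\csm{\{a\}}{\{1\}}=a$).

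First I would fix an arbitrary $a\in U$ and observe that $\{a\}\subseteq U$. Applying Proposition \ref{prop:formera} with $U_1=\{a\}$, $U_2=U$, and $V=\{1\}$, I get $\csm{\{a\}}{\{1\}}\geq\csm{U}{\{1\}}$. Then Lemma \ref{lemma:c1isc} identifies the left-hand side as $a$, so $\csm{U}{\{1\}}\leq a$. Since $a\in U$ was arbitrary, $\csm{U}{\{1\}}$ is indeed a lower bound of $U$.

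There is no real obstacle here; the proposition is essentially an immediate corollary of the work already done in Proposition \ref{prop:formera} together with the base case computed in Lemma \ref{lemma:c1isc}. The only thing worth flagging is that this argument uses the strong hypothesis only indirectly, through Proposition \ref{prop:formera} (whose Case~2 and Case~3 rely on Proposition \ref{prop:csmfromD}, and hence on strength); for a non-strong compatibility support mapping one cannot conclude this from the present chain of lemmas.
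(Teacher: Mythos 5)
Your proof is correct, but it is organized differently from the paper's. The paper proves this proposition by induction on $U$: it adds one element $c$ at a time, uses Proposition \ref{prop:formera} to get $\csm{U\cup\{c\}}{\{1\}}\leq\csm{U}{\{1\}}$, and then bounds $\csm{U\cup\{c\}}{\{1\}}$ by $c$ via Lemma \ref{prop:trop} together with condition (d), i.e.\ $\csm{U\cup\{c\}}{\{1\}}=\csm{U}{\{c\}}\leq\csm{\emptyset}{\{c\}}=c$. You instead compare $\csm{U}{\{1\}}$ directly with the singletons: for each $a\in U$, $\{a\}\subseteq U$ gives $\csm{U}{\{1\}}\leq\csm{\{a\}}{\{1\}}=a$ by Proposition \ref{prop:formera} and Lemma \ref{lemma:c1isc}. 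Both arguments rest on the same key ingredient (the antitonicity of Proposition \ref{prop:formera}, hence on strength), but yours avoids the induction and Lemma \ref{prop:trop} altogether, which makes it shorter; the paper's inductive version mirrors the structure of the companion proposition about $\csm{\emptyset}{V}$ being an upper bound of $V$, where a genuinely inductive argument is needed. One small quibble with your closing remark: Case 2 of Proposition \ref{prop:formera} uses only part (2) of Proposition \ref{prop:csmfromD}, whose proof needs just condition (e), so strength enters only through Cases 1 and 3 (via Lemma \ref{lemma:nondisjoint} and Lemma \ref{prop:trop}); this does not affect the validity of your proof, since the proposition is stated under the running assumption that the mapping is strong.
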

\begin{proof}
Any element is a lower bound of $\emptyset$.

Suppose that the proposition is true for some $U$
and pick $c\in S\setminus U$.
By Proposition \ref{prop:formera},
$$
\csm{U\cup\{c\}}{\{1\}}\leq\csm{U}{\{1\}}.
$$
By the induction hypothesis, $\csm{U}{\{1\}}$
is a lower bound of $U$. It remains to prove that
$\csm{U\cup\{c\}}{\{1\}}\leq c$. By Proposition
\ref{prop:trop},
$\csm{U\cup\{c\}}{\{1\}}=\csm{U}{\{c\}}$. By Proposition
\ref{prop:formera} 
and condition (d), 
$\csm{U}{\{c\}}\leq\csm{\emptyset}{\{c\}}=c$.
\end{proof}
\begin{corollary}
$\csm{U}{V}$ is a lower bound of $U$.
\end{corollary}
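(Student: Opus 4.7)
The plan is to chain together two facts that have already been established. The previous proposition (Proposition \ref{prop:lowbound}) shows that $\csm{U}{\{1\}}$ is a lower bound of $U$, and condition (b) of Definition \ref{def:csm} asserts that $\csm{U}{V}\leq\csm{U}{\{1\}}$. By transitivity of $\leq$ on the effect algebra $E$, for every $u\in U$ we then obtain $\csm{U}{V}\leq\csm{U}{\{1\}}\leq u$, so $\csm{U}{V}$ is itself a lower bound of $U$.

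There is essentially no obstacle here; the corollary is a one-line consequence of Proposition \ref{prop:lowbound} combined with axiom (b). In particular, there is no need to revisit the inductive structure used in Proposition \ref{prop:lowbound}, nor to invoke Proposition \ref{prop:csmfromD} or the decomposition lemmas. The only thing worth noting is that the conclusion does not require $U$ and $V$ to satisfy any disjointness condition, since (b) holds unconditionally.
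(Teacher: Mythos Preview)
Your proof is correct and matches the paper's own argument essentially verbatim: the paper also simply cites Proposition~\ref{prop:lowbound} for the fact that $\csm{U}{\{1\}}$ is a lower bound of $U$ and then invokes condition~(b) to conclude $\csm{U}{V}\leq\csm{U}{\{1\}}$.
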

\begin{proof}
By Proposition \ref{prop:lowbound},
$\csm{U}{\{1\}}$ is a lower bound of $U$. By condition (b),
$\csm{U}{V}\leq\csm{U}{\{1\}}$.
\end{proof}
\begin{proposition}
$\csm{\emptyset}{V}$ is an upper bound of $V$.
\end{proposition}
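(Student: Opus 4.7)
The plan is to derive the statement directly from two of the defining axioms, without invoking induction or any of the decomposition machinery developed earlier in this section. Given an arbitrary $v \in V$, I observe that $\{v\} \subseteq V$, so condition (a) of Definition \ref{def:csm} applied with $U = \emptyset$ yields
\[
\csm{\emptyset}{\{v\}} \leq \csm{\emptyset}{V}.
\]
Condition (d) identifies $\csm{\emptyset}{\{v\}}$ with $v$ itself, so $v \leq \csm{\emptyset}{V}$. Since $v$ was arbitrary in $V$, this shows $\csm{\emptyset}{V}$ is an upper bound of $V$.

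I expect no serious obstacle here. Unlike Proposition \ref{prop:lowbound}, where the first slot grows and one must run an induction on $|U|$ using Proposition \ref{prop:trop} to transfer the fresh element from the left slot to the right slot, in the present statement the first slot is permanently $\emptyset$, so plain monotonicity in the second slot reduces the claim to the singleton case, which axiom (d) settles on the nose. It is also worth remarking that the argument uses only axioms (a) and (d), so strongness of $\csm{~.~}{~.~}$ (which is in force via the running assumption of this section) is not actually needed; the proposition holds for every compatibility support mapping.
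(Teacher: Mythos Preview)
Your proof is correct and considerably more direct than the paper's. The paper argues by induction on $|V|$: in the inductive step it uses the strong condition (e*) with $U=\emptyset$ to obtain
\[
(\csm{\{c\}}{\{1\}}\ominus\csm{\{c\}}{V})\oplus\csm{\emptyset}{V}=\csm{\emptyset}{V\cup\{c\}},
\]
then combines this with $\csm{\{c\}}{V}\leq\csm{\emptyset}{V}$ (an instance of Proposition~\ref{prop:formera}) and Lemma~\ref{lemma:c1isc} to conclude $c\leq\csm{\emptyset}{V\cup\{c\}}$. You bypass all of this by observing that monotonicity in the second slot is already axiom (a), so $v=\csm{\emptyset}{\{v\}}\leq\csm{\emptyset}{V}$ follows immediately from (a) and (d). The paper seems to have mechanically mirrored the structure of the proof of Proposition~\ref{prop:lowbound}, where induction and the strong axiom really are needed because there is no axiom giving antitonicity in the first slot; but on the second slot, axiom (a) does the job outright. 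Your remark that only (a) and (d) are used, and hence the result holds for arbitrary (not necessarily strong) compatibility support mappings, is a genuine strengthening of what the paper proves.
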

\begin{proof}
Any element is an upper bound of $\emptyset$.

Suppose that the proposition is true for some $V$
and pick $c\in S\setminus V$.
By condition (a), 
$$
\csm{\emptyset}{V}\leq\csm{\emptyset}{V\cup\{c\}}
$$
and by induction hypothesis, $\csm{\emptyset}{V}$ is an
upper bound of $V$. It remains to prove that
$c\leq \csm{\emptyset}{V\cup\{c\}}$.

Put $U=\emptyset$ in condition (e*):
$$
\csm{\{c\}}{\{1\}}\ominus\csm{\{c\}}{V}=
	\csm{\emptyset}{V\cup\{c\}}\ominus\csm{\emptyset}{V}.
$$
Add $\csm{\emptyset}{V}$ to both sides to obtain
$$
(\csm{\{c\}}{\{1\}}\ominus\csm{\{c\}}{V})\oplus\csm{\emptyset}{V}=
	\csm{\emptyset}{V\cup\{c\}}.
$$
As $\csm{\{c\}}{V}\leq\csm{\emptyset}{V}$,
$$
\csm{\{c\}}{\{1\}}\leq
	(\csm{\{c\}}{\{1\}}\ominus\csm{\{c\}}{V})\oplus\csm{\emptyset}{V}.
$$
By Lemma \ref{lemma:c1isc}, 
$\csm{\{c\}}{\{1\}}=c$.
\end{proof}

\section{Compatibility support mappings and witness mappings}

Let $(G,\leq)$ be a partially
ordered abelian group and 
$u\in G$ be a positive element.
For $0\leq a,b\leq u$, define $a\oplus b$ if and only if
$a+b\leq u$ and put $a\oplus b=a+b$.  With such a partial operation $\oplus$, the
closed interval 
$$
[0,u]_G=\{x\in G:0\leq x\leq u\}
$$ 
becomes an effect algebra $([0,u]_G,\oplus,0,u)$.  Effect
algebras which arise from partially ordered abelian groups in this way are
called {\em interval effect algebras}, see \cite{BenFou:IaSEA}.

Let $E$ be an interval effect algebra in a partially ordered abelian group $G$.
Let $S\subseteq E$. Let us write $\Fin(S)$ for the set of all
finite subsets of $S$. We write $I(\Fin(S))$ for the set of all comparable
elements of the poset $(Fin(S),\subseteq)$, that means,
$$
I(\Fin(S))=\{(X,Y)\in\Fin(S)\times\Fin(S):X\subseteq Y\}.
$$

For every mapping $\beta:\Fin(S)\to G$, we define a
mapping $D_\beta:I(\Fin(S))\to G$.
For $(X,A)\in I(\Fin(S))$,
the value
$D_\beta(X,A)\in G$ is given by the rule
$$
D_\beta(X,A):=\sum_{X\subseteq Z\subseteq A}(-1)^{|X|+|Z|}\beta(Z).
$$

In \cite{Jen:CiIEA}, we introduced and studied the following notion:
\begin{definition}\label{def:cm}
Let $E$ be an interval effect algebra.
We say that a mapping $\beta:\Fin(S)\to E$ is a {\em witness
mapping for $S$} if and only if the following conditions are satisfied.
\begin{enumerate}
\item[(A1)]$\beta(\emptyset)=1$,
\item[(A2)]for all $c\in S$, $\beta(\{c\})=c$,
\item[(A3)]for all $(X,A)\in I(\Fin(S))$, $D_\beta(X,A)\geq 0$.
\end{enumerate}
\end{definition}
We proved there, that a subset $S$ of an interval effect algebra $E$
is coexistent if and only if there is a witness 06 $\beta:\Fin(S)\to E$.

The aim of this section is to explore the connection between the
notion of a witness mapping and the notion of compatibility support mappings.

\begin{proposition}
\label{prop:wmfromcsm}
Let $E$ be an interval effect algebra, let $S$ be a subset of $E$ with $1\in S$.
Suppose there is a compatibility support mapping 
$\csm{~.~}{~.~}:\Fin(S)\times \Fin(S)\to S$.
Then $\beta:\Fin(S)\to E$, given by $\beta(X)=\csm{X}{\{1\}}$ is
a witness mapping and $D(X,A)=D_\beta(X,A)$, for all $(X,A)\in I(\Fin(S))$.
\end{proposition}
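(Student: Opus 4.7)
The plan is to verify conditions (A1) and (A2) directly from the definition, then prove the identity $D(X,A)=D_\beta(X,A)$ by induction on $n=|A\setminus X|$. Once this identity is in hand, (A3) is automatic, since $D(X,A)=\csm{X}{\{1\}}\ominus\csm{X}{A\setminus X}$ is a well-defined element of $E$ (condition (b) of Definition \ref{def:csm} guarantees the subtraction exists), hence nonnegative in the ambient group $G$.

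Condition (A1) follows from condition (d) of Definition \ref{def:csm} applied with $c=1$: $\beta(\emptyset)=\csm{\emptyset}{\{1\}}=1$. Condition (A2) follows from Lemma \ref{lemma:c1isc}: $\beta(\{c\})=\csm{\{c\}}{\{1\}}=c$. For the base case $n=0$ of the induction, $A=X$ and both sides equal $\beta(X)$: on the $D$-side, $D(X,X)=\csm{X}{\{1\}}\ominus\csm{X}{\emptyset}=\csm{X}{\{1\}}$ by (c), and on the $D_\beta$-side, the single summand is $(-1)^{2|X|}\beta(X)=\beta(X)$.

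For the inductive step, pick any $c\in A\setminus X$ and set $A'=A\setminus\{c\}$. Since $X\subseteq A'$ and $c\notin A'$, Lemma \ref{lemma:first} yields
\[
D(X,A')=D(X,A)\oplus D(X\cup\{c\},A),
\]
which, read as an equation in $G$ (recall that $\oplus$ in an interval effect algebra is the restriction of $+$), rearranges to $D(X,A)=D(X,A')-D(X\cup\{c\},A)$. On the combinatorial side, split the sum defining $D_\beta(X,A)$ according to whether $c\in Z$. The terms with $c\notin Z$ range over $X\subseteq Z\subseteq A'$ and sum to $D_\beta(X,A')$. The terms with $c\in Z$ can be reindexed by $W=Z$ with $X\cup\{c\}\subseteq W\subseteq A$; since $|X\cup\{c\}|=|X|+1$, these contribute exactly $-D_\beta(X\cup\{c\},A)$. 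Hence
\[
D_\beta(X,A)=D_\beta(X,A')-D_\beta(X\cup\{c\},A),
\]
and applying the inductive hypothesis to the pairs $(X,A')$ and $(X\cup\{c\},A)$, both of which have smaller $|A\setminus X|$, completes the induction.

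The only point requiring care is the sign bookkeeping in the inclusion-exclusion split, together with the observation that one must work in $G$ to translate the $\oplus$-identity of Lemma \ref{lemma:first} into a subtraction that can be matched term-by-term against the Möbius-type sum defining $D_\beta$; this is precisely where the hypothesis that $E$ is an \emph{interval} effect algebra gets used.
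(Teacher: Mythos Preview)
Your proof is correct and follows essentially the same route as the paper: verify (A1) and (A2) directly, then prove $D(X,A)=D_\beta(X,A)$ by induction on $|A\setminus X|$, using Lemma~\ref{lemma:first} for the $D$-side and the additive splitting of the M\"obius-type sum for the $D_\beta$-side. The only cosmetic difference is that the paper cites the $D_\beta$ recursion as Lemma~1 of \cite{Jen:CiIEA}, whereas you derive it in-line by splitting on $c\in Z$ versus $c\notin Z$; your version is slightly more self-contained but otherwise identical in structure.
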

\begin{proof}
We see that, by the condition (d) of Definition \ref{def:csm},
$$
\beta(\emptyset)=\csm{\emptyset}{\{1\}}=1,
$$
so the condition 
(A1) of Definition \ref{def:cm} is satisfied.
By Lemma \ref{lemma:c1isc}, 
$$
\beta(\{c\})=\csm{\{c\}}{\{1\}}=c,
$$
hence (A2) is satisfied.

For the proof of (A3), it suffices to prove that
$D(X,A)=D_\beta(X,A)$, for all $(X,A)\in I(\Fin(S))$.
The positivity of $D_\beta$ then follows from the positivity
of $D$. The proof goes by induction with respect to $|A\setminus X|$.

If $|A\setminus X|=0$, then $A=X$ and
$$
D_\beta(X,A)=\beta(X)=\csm{X}{\{1\}}=
\csm{X}{\{1\}}\ominus 0=
\csm{X}{\{1\}}\ominus \csm{X}{\emptyset}=D(X,A).
$$

Suppose that $D(X,A)=D_\beta(X,A)$, for all $(X,A)\in I(\Fin(S))$
such that $|A\setminus X|=n$.
Let $(Y,B)\in I(\Fin(S))$ be such that $|B\setminus Y|=n+1$.
Pick $c\in B\setminus Y$ and put $X=Y$, $A=B\setminus\{c\}$.

By Lemma 1 of \cite{Jen:CiIEA}, for any mapping $\beta:\Fin(S)\to E$,
for all $(X,A)\in I(\Fin(S))$ and for all $c\in S\setminus A$, the
following equality is satisfied:
$$
D_\beta(X,A)=D_\beta(X,A\cup\{c\})+D_\beta(X\cup\{c\},A\cup\{c\}).
$$
Therefore,
$$
D_\beta(Y,B)=D_\beta(X,A\cup\{c\})=
D_\beta(X,A)\ominus D_\beta(X\cup\{c\},A\cup\{c\}).
$$
By the induction hypothesis, $D_\beta(X,A)=D(X,A)$ and
$D_\beta(X\cup\{c\},A\cup\{c\})=D_\beta(X\cup\{c\},A\cup\{c\})$.
Thus,
$$
D_\beta(Y,B)=
D(X,A)\ominus D(X\cup\{c\},A\cup\{c\}).
$$
By Lemma \ref{lemma:first}, 
$$
D(X,A)\ominus D(X\cup\{c\},A\cup\{c\})=D(X,A\cup\{c\})=D_\beta(Y,B).
$$
\end{proof}

The following problem remains open.

\begin{problem}
Let  $E$ be an effect algebra, let $S\subseteq E$, let $\beta:\Fin(S)\to E$ be
a witness mapping. Is there always a compatibility support mapping
$\csm{~.~}{~.~}:\Fin(S)\times \Fin(S)\to S$ such that $\beta(X)=\csm{X}{\{1\}}$?
\end{problem}

%\bibliography{mybib}
%\bibliographystyle{hplain}

\end{document}